\newcommand{\C}{\mathbb{C}}
\newcommand{\R}{\mathbb{R}}
\newcommand{\N}{\mathbb{N}}
\newcommand{\E}{\mathbb{E}}
\newcommand{\PP}{\mathbb{P}}
\newcommand{\la}{\lambda}
\newcommand{\1}{\mathbf{1}}
\newcommand{\ind}{1\!\!1}
\newtheorem{dfn}{Definition}
\newtheorem{thm}{Theorem}
\newtheorem{rem}{Remark}
\title{Measure on gauge invariant symmetric norms}
\author{A. Lovas}
\author{A. Andai}
\affil{Department of Analysis, Budapest University of Technology and Economics}
\begin{document}

\maketitle

\begin{abstract}
The concept of a gauge invariant symmetric random norm
is elaborated in this paper. We introduce norm processes
and show that this kind of stochastic processes are
closely related to
gauge invariant symmetric random norms. We construct a
gauge invariant symmetric random norm on the plain.
We define two different extensions of these random norms
to higher (even infinite) dimensions. We calculate
numerically unit spheres of expected norms in two and
three dimensions for the constructed random norm.  
\end{abstract}

% \tableofcontents

\bigskip
\section{Introduction} % Bevezetés

\medskip
We do not need to emphasize that norms and metrics induced by norms play
very important role in analysis. 
A norm $||.||:\C^n\to [0,\infty)$ on $\C^n$ is called \emph{gauge invariant} 
if it satisfies the condition
\[
\forall x\in\C^n\quad ||x|| = ||\,|x|\,||
\]
where $|.|$ denotes the element-wise absolute value of the vector, and it is
said to be \emph{symmetryc} if it satisfies the condition
\[
\forall \pi\in S_n\,\,\forall x\in\C^n \quad ||x\circ\pi|| = ||x|| 
\]
where $S_n$ denotes the symmetric group of order $n$.
Obviously, familiar $p$-norms frequently used in analysis possess these properties.
Note that gauge invariant symmetric norms are determined by those on 
$\R_{+,\ge}^n:=\{(x_1,\ldots,x_n)| x_1\ge x_2\ge \ldots \ge x_n \ge 0\}$
\cite{bhatia}. All kind of norms considered in this paper
has the property that the norm of the vector $(1,0,0,\ldots,0)$ is
equal to $1$ which is called the normalization convention.

There are some applications when it would be useful
to define a measure on gauge invariant symmetric norms which 
can be given on the considered vector space. 
These measures can be interpreted as a random norm.

\medskip
\begin{dfn}\label{rn}
 Let $(\Omega,\mathcal{F},\PP)$ be a probability space. 
 A function $p:\Omega\times\C^n\to [0,\infty)$ is called
 \emph{gauge invariant symmetric random norm} (GSRN) if the following
 conditions hold:
 \begin{enumerate}[(i)]
  \item 
  $\PP\left(\left\lbrace\omega\in\Omega\left|
  \,
  p(\omega,.):\C^n\to [0,\infty) 
  \text{ is a symmetric gauge invariant norm.}
  \right.\right\rbrace\right)=1$.
  
  % $\forall \omega\in\Omega\quad p(\omega,.):\C^n\to [0,\infty)$ is a symmetric
  % gauge invariant norm.
  \item $\forall x\in\C^n\quad p(.,x):\Omega\to [0,\infty)$ is a random variable.
 \end{enumerate}
\end{dfn}
\noindent
It can be easily deduced from the definition of gauge invariant symmetric norm 
and the normalization convention that
\begin{equation}\label{inh}
 \forall x\in\C^n\quad
 \PP\left(\left\lbrace
 \omega\in\Omega
 \left|
 p(\omega,x)\notin [||x||_{\infty},||x||_1]
 \right.
 \right\rbrace\right) = 0
\end{equation}
holds, and for any $x\in\C^n$ the expectation of $p(.,x)$ is a symmetric gauge invariant norm.

\smallskip
The paper is organized as follows. 
Section 2 is divided into three subsections.
In subsection 2.1 and 2.2, we introduce norm processes and
theirs path integral representation.
In subsection 2.3, we give some basic definitions
for continuous-time Markov chains and we define 
Markovian GSRNs.
In section 3, we present an efficient tool 
for studying the distribution of continuous-time 
Markov chain time integrals and we
construct a Markovian GSRN on the plain.
Section 4 deals with higher dimensional generalizations
of GSRNs and an open problem is presented in this section.
Finally, in section 5 an application is presented.

\bigskip
\section{GSRN on the plain} 
% Konstrukció R^2-ben

\bigskip
\subsection{Norm processes}

\medskip
Obviously, a symmetric gauge invariant norm restricted to
$\R^2_{+,\ge}$ can be given by its unit circle which can be
parametrized by the second coordinates of points lying on it.
This observation motivates the following definition.
\begin{dfn}\label{nf}
 A real valued stochastic process $(X_t)_{t\ge 0}$ 
 is called a \emph{norm process} if its
 realizations satisfy the following conditions 
 $\PP$-a.s.:
 \begin{enumerate}[(i)]
  \item $X_0 = 0$,
  \item $\forall\, 0\le t_1<t_2 \quad 0\le \frac{X_{t_2}-X_{t_1}}{t_2-t_1}\le 1$ and
  \item $t\mapsto X_t$ is convex and continuous.
 \end{enumerate}
\end{dfn} 

The next Theorem states that norm processes can be considered as
a paramerization of the unit circles restricted to $\R^2_{+,\ge}$.
\begin{thm}
Let $(X_t)_{t\ge 0}$  be a norm process, the corresponding
probability space of which is $(\Omega,\mathcal{F},\PP)$.
It is true
for $\PP$-a.s. $\omega\in\Omega$ that
for any $v=(v_1,v_2)\in\R^2_{+,\ge}\setminus \{(0,0)\}$ 
there is a unique $p\in  \left[||v||_\infty,||v||_1 \right]$
such that
\[
\frac{v_1}{p}+X_{\frac{v_2}{p}}(\omega) = 1
\]
holds and the function 
$p:\Omega\times\R^2_{+,\ge}\setminus \{(0,0)\}\to [0,\infty)$,
which is defined for $\PP$-a.s. $\omega\in\Omega$, 
extended to $0\in\R^2_{+,\ge}$ as $p(0):=0$ is a GSRN.
\end{thm}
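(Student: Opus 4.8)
The plan is to analyze the scalar equation $\frac{v_1}{p} + X_{v_2/p} = 1$ pointwise in $\omega$ (on the full-measure set where the realization $t \mapsto X_t(\omega)$ satisfies the three defining properties of a norm process), reparametrize it into a fixed-point / root-finding problem, establish existence and uniqueness of the root via monotonicity and the intermediate value theorem, and then verify the three defining properties of a GSRN — namely that $p(\omega,\cdot)$ is a symmetric gauge invariant norm for a.e. $\omega$, and that $p(\cdot,x)$ is measurable for each fixed $x$.

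\medskip

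First I would fix $\omega$ in the full-measure set and $v=(v_1,v_2)\in\R^2_{+,\ge}\setminus\{(0,0)\}$, and study the function $g(p) := \frac{v_1}{p} + X_{v_2/p}(\omega)$ for $p \in [\,||v||_\infty, ||v||_1\,] = [v_1, v_1+v_2]$ (using $v_1 \ge v_2 \ge 0$). I would show $g$ is continuous and strictly decreasing on this interval: the term $v_1/p$ is strictly decreasing (when $v_1>0$), and $X_{v_2/p}(\omega)$ is nonincreasing in $p$ because $v_2/p$ decreases and $t \mapsto X_t$ is nondecreasing (property (ii) with $t_1=0$, together with $X_0=0$). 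Then I would evaluate the endpoints: at $p = v_1+v_2$ one gets $g = \frac{v_1}{v_1+v_2} + X_{v_2/(v_1+v_2)}$, and since the slope condition $0 \le (X_{t_2}-X_{t_1})/(t_2-t_1) \le 1$ forces $X_t \le t$, this is $\le \frac{v_1}{v_1+v_2} + \frac{v_2}{v_1+v_2} = 1$; at $p = v_1$ one gets $g = 1 + X_{v_2/v_1} \ge 1$ (when $v_1>0$). Continuity plus strict monotonicity then give a unique root $p \in [v_1, v_1+v_2]$. The degenerate cases $v_1 = 0$ (so $v_2=0$ forced by the ordering, excluded) and $v_2=0$, $v_1>0$ (forcing $p=v_1$) I would dispatch separately; one must be slightly careful that strict monotonicity can degrade to non-strict on subintervals where $X$ is flat, but because the $v_1/p$ term is strictly decreasing whenever $v_1>0$, uniqueness survives.

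\medskip

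Next I would check that $p(\omega,\cdot)$, extended by homogeneity and by the gauge/symmetry conventions to all of $\C^n$ with $n=2$, is a genuine symmetric gauge invariant norm for a.e. $\omega$. Homogeneity $p(\omega,\lambda v) = |\lambda|\, p(\omega,v)$ follows from the scaling structure of the defining equation: replacing $v$ by $tv$ replaces the solution $p$ by $tp$. The gauge invariance and symmetry on $\C^2$ are built in, since $p$ depends only on $|v|$ sorted into $\R^2_{+,\ge}$; and the normalization $p(\omega,(1,0))=1$ is the $v_2=0$ case above. The substantive point is the triangle inequality: this is where the convexity hypothesis (iii) on $t\mapsto X_t$ does the work. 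I would argue that the set $\{v \in \R^2_{+,\ge} : \frac{v_1}{p^*} + X_{v_2/p^*}(\omega) \le 1\}$ — i.e. the closed unit "ball" in the positive sector for the candidate norm with fixed level $p^*$ — is convex, because it is (a slice of) the subgraph region of the concave-from-above curve $v_1 = p^*(1 - X_{v_2/p^*})$, and $v_2 \mapsto p^*(1 - X_{v_2/p^*})$ is concave precisely because $X$ is convex. Convexity of the unit ball, together with the already-established homogeneity and the symmetry/gauge reduction, yields subadditivity on $\R^2_{+,\ge}$ and hence on all of $\C^2$ by the standard reduction quoted from \cite{bhatia}.

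\medskip

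Finally I would verify measurability: for fixed $v$, $\omega \mapsto p(\omega,v)$ is measurable. The clean way is to write, for each rational (or real) threshold $c>0$,
\[
\{\omega : p(\omega,v) \le c\} = \Bigl\{\omega : \tfrac{v_1}{c} + X_{v_2/c}(\omega) \le 1\Bigr\},
\]
which holds because $g(p)=\frac{v_1}{p}+X_{v_2/p}(\omega)$ is (weakly) decreasing in $p$ and crosses $1$ exactly at $p(\omega,v)$; the right-hand side is measurable since $X_{v_2/c}$ is a random variable by definition of a stochastic process. This shows $p(\cdot,v)$ is a random variable. I expect the \textbf{main obstacle} to be the triangle inequality — specifically, making the passage "convexity of $t\mapsto X_t$ $\Rightarrow$ convexity of the unit ball $\Rightarrow$ subadditivity" fully rigorous, including handling the boundary of the positive sector $\R^2_{+,\ge}$ correctly and confirming that the reduction to general $x\in\C^2$ via the gauge/symmetric structure is legitimate (one must check that sorting and taking absolute values interacts properly with vector addition in the inequality). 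The existence/uniqueness part and the measurability part I expect to be routine once the monotonicity of $g$ is pinned down.
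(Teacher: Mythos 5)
Your proposal is correct and follows essentially the same route as the paper: existence and uniqueness via monotonicity of $p\mapsto \frac{v_1}{p}+X_{v_2/p}(\omega)$ together with the endpoint bounds coming from $0\le X_t\le t$, the triangle inequality from convexity of $t\mapsto X_t$, and measurability by rewriting $\{p(\cdot,v)<y\}$ as an event about $X_{v_2/y}$. The only cosmetic difference is that you derive subadditivity geometrically from convexity of the unit ball, whereas the paper applies the convexity inequality directly to $\frac{p(\omega,v)}{p(\omega,v)+p(\omega,w)}X_{v_2/p(\omega,v)}+\frac{p(\omega,w)}{p(\omega,v)+p(\omega,w)}X_{w_2/p(\omega,w)}$; these are equivalent arguments.
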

\begin{proof}
 
 Suppose that for $\omega\in\Omega$ conditions $i.-iii.$
 in definition \ref{nf}. are satisfied. Let 
 $v=(v_1,v_2)\in\R^2_{+,\ge}\setminus \{(0,0)\}$
 be an arbitrary vector. The function
 $(0,\infty)\ni p\mapsto \frac{v_1}{p}+X_{\frac{v_2}{p}}(\omega)$
 is continuous, strictly decreasing and
\begin{equation}
 \begin{split}
  \frac{v_1}{v_1}+X_{\frac{v_2}{v_1}}        (\omega) &\ge 1\\
  \frac{v_1}{v_1+v_2}+X_{\frac{v_2}{v_1+v_2}}(\omega) &\le 1
 \end{split}
\end{equation}
hold because $0\le X_t\le t$.
This implies that there exists a unique 
$p\in  \left[||v||_\infty,||v||_1 \right]$ 
for which 
%\[
$
\frac{v_1}{p}+X_{\frac{v_2}{p}}(\omega) = 1
$.
%\]
%is held.

Let us consider the extension of $p$ and choose an
$\omega\in\Omega$ as above.
\begin{enumerate}[1.]
 \item For all $v\in\R^2_{+,\ge}$
 $p(\omega,v)=0 \Leftrightarrow v=0$
 because $p(\omega,v)\in\left[||v||_\infty,||v||_1 \right]$.
 
 \item For all $\alpha >0$ $\frac{\alpha v_1}{p(\omega,\alpha v)} + X_{\frac{\alpha v_2}{p(\omega,\alpha v)}}=1$
 hence $p(\omega,\alpha v) = \alpha p(\omega,v)$.
 
 \item If $v,w\in\R^2_{+,\ge}$ are nonzeros vectors, then 
 due to the convexity of $t\mapsto X_t(\omega)$ we have
 \begin{equation}
 \begin{split}
 1&=\frac{v_1+w_1}{p(\omega,v)+p(\omega,w)}
 +
 \frac{p(\omega,v)}{p(\omega,v)+p(\omega,w)}
 X_{\frac{v_2}{p(\omega,v)}}
 +
 \frac{p(\omega,w)}{p(\omega,v)+p(\omega,w)}
 X_{\frac{w_2}{p(\omega,w)}}\\
 &\ge \frac{v_1+w_1}{p(\omega,v)+p(\omega,w)} +
 X_{\frac{v_2+w_2}{p(\omega,v)+p(\omega,w)}}
 \nonumber
 \end{split}
 \end{equation}
 hence $p(\omega,v+w)\le p(\omega,v)+p(\omega,w)$.
\end{enumerate}
So we have deduced that if $\omega\in\Omega$
satisfies conditions $(i)-(iii)$ in definition \ref{nf},
$p(\omega,.):\R^2_{+,\ge} \to [0,\infty)$
defines a symmetric gauge invariant norm.

Let $v\in\R^2_{+,\ge}$ be an arbitrary vector and $y\in (0,\infty)$
we have
\[
\PP \left(p(.,v)<y\right) = 
\PP \left(\frac{v_1}{y}+X_{\frac{v_2}{y}}<1\right)
=
\PP \left(X_{\frac{v_2}{y}}<1-\frac{v_1}{y}\right)
\]
which means that $p(.,v):\Omega\to \left[||v||_\infty,||v||_1 \right]$
is a random variable.
\end{proof}

\medskip
If $y\in \left[||v||_\infty,||v||_1 \right]$, then
$0\le \frac{v_2}{y}\le\frac{v_2}{v_1+v_2}\le 1$ and
$0\le 1-\frac{v_1}{y}\le 1-\frac{v_1}{v_1+v_2}\le 1$.
Consequently, it is enough to consider the function
\[
 (t,x)\mapsto \PP \left(X_t(.)<x\right)
\]
on $[0,1]^2$. Conversely, unit circles of a GSRN 
restrcited to $\R^2_{+,\ge}$ can be considered as
graphs of a pathwise restricted norm process which
cannot be identified uniquely by the GSRN.

\bigskip
\subsection{Representation of norm processes}

\medskip
We have seen that norm processes are closely related to GSRNs
therefore it would be desirable to find good representations
of it.
We know from the theory of integration that a continuous
monotone function is almost all differentiable
and it is equal to the integral function of its
almost all existing derivative \cite{realanal}.
If we apply this fact to the trajectories of a norm process
$(X_t)_{t\ge 0}$, we get that there exists a
process $\left(Z_t\right)_{t\ge 0}$ such that
\[X_t(.)\stackrel{\PP\text{-a.s.}}{=}
\int\limits_0^t Z_s(.)\,\mathrm{d}s
\]
and trivially the realizations of $\left(Z_t\right)_{t\ge 0}$
are non negative, increasing and bounded functions
whose upper bound is one. Therefore, we may assume that
\[\left(Z_t\right)_{t\ge 0} = (\tilde{F}\circ Y_t)_{t\ge 0}\]
where $(Y_t)_{t\ge 0}$ is a $\PP$-a.s. increasing 
stochastic process
in a partially ordered metric space $(S,\le)$ and 
$\tilde{F}:S\to [0,1]$ is a monotone increasing function.

\begin{equation}\label{eq:rep}
 X_t (.)\stackrel{\PP\text{-a.s.}}{=} \int\limits_{0}^t
 \tilde{F}\circ Y_s(.)\,\mathrm{d}s
\end{equation}

 The above introduced path integral representation
 suggests another representation for norm processes.
 If we assume that $\tilde{F}$ is a probability 
 distribution function
 corresponding to a random variable $\xi\in S$
 which is defined
 on a different probability space 
 $(\Lambda,\mathcal{G},\tilde{\PP})$
 and we consider $\xi$ and the process
 $(Y_t)_{t\ge 0}$
 as random processes on
 $(\Omega\times\Lambda,\mathcal{F}\otimes\mathcal{G},\PP\otimes\tilde{\PP})$
 that are independent, then we can write
 \begin{equation}
 \begin{split}
 X_t (.)\stackrel{\PP\text{-a.s.}}{=} 
 \int\limits_{0}^t \tilde{F}\circ Y_s(.)\,\mathrm{d}s
 =&\int\limits_{0}^t \tilde{\PP} (\xi < Y_s)\,\mathrm{d}s =\\
 =&
 \int\limits_{0}^t \int\limits_{\Lambda} \ind_{\xi (\eta)<Y_s(.)} \,\mathrm{d}\tilde{\PP}(\eta)\,\mathrm{d}s
 =
  \int\limits_{\Lambda} \int\limits_{0}^t \ind_{\xi (\eta)<Y_s(.)} \,\mathrm{d}s\,\mathrm{d}\tilde{\PP}(\eta)
  =\E_{\tilde{\PP}}\left((t-\tau_{\xi}(.))_+\right),
  \end{split}
 \end{equation}
where $\tau_r$ is the hitting time of level $S\ni r$: $\tau_{r} = \inf\{s\ge 0|Y_s\ge r\}$.

\bigskip
\subsection{Markovian GSRNs}
\medskip

\begin{dfn}
 A GSRN is said to be \emph{Markovian} if the associated norm process can be derived
 from a Markovian process through taking its integral in time.
\end{dfn}
\noindent
Markovian GSRNs are neither trivial nor so complicated
that we cannot understand their behaviour especially in
cases when the state space is finite. For this reason,
some elementary facts are sketched about continuous-time Markov chains.
This plays just an introductory role and more information
about continuous-time Markov chains can be found in \cite{markov}.
\begin{dfn}
 A stochastic process $(Y_t)_{t\ge 0}$ on a countable
 state space $S$ is said to be a time homogeneous
 \emph{continuous-time Markov chain}, if it is memoryless
 which means
 \[
 \PP \left(Y_t = \beta\left|Y_{t_1}=\alpha_1,\ldots Y_{t_n}=\alpha_n\right.\right)
 =
 \PP \left(Y_t = \beta\left|Y_{t_1}=\alpha_1\right.\right)
 \]
 holds for each $0< t_1,\ldots, t_n< t$ and for any 
 $\alpha_1,\ldots,\alpha_n,\beta\in S$, there exists a
 mapping $P:[0,\infty)\to \R^S\times\R^S$ for which
 \[
 \forall t\in [0,\infty)
 \quad
 \forall\alpha ,\beta\in S
 \quad
 \PP \left(Y_t = \beta\left|Y_0=\alpha\right.\right) 
 = P(t)_{\alpha \beta}
 \]
 holds and $P$ satisfies the following properties
 \begin{enumerate}[(i)]
  \item $P(0) = \mathrm{id}_{\R^S}$
  \item $\exists\lim\limits_{t\searrow 0} P(t) = \mathrm{id}_{\R^S}$
  \item $P(t+s)=P(t)P(s)$ $\forall t,s\in [0,\infty)$.
 \end{enumerate}
 For each $t\in [0,\infty)$ the matrix $P(t)$ is called \emph{transition matrix}
 corresponding to the time point $t$.
\end{dfn}
Conditions $(i)-(iii)$ in definition above imply that
there exists a unique $G\in\R^S\times\R^S$ for which
$P(t)=e^{tG}$ holds and $G$ is called the 
\emph{infinitesimal generator} of
continuous-time Markov chain \cite{markov}.
 
\bigskip 
\section{An example for GSRN}\label{exa}

\bigskip
\subsection{Path integral of continuous-time Markov chains} 
% Itt kiszámoljuk a karakterisztkus függvényt

% Ide le kellene írni, hogy ez tkp Feyman-Kac, és komplexre nem triv.

% Lie-Throtter formulás bizonyítás + megjegyzés folytonos idő esetére

\medskip
In this point path integrals of continuous-time Markov chains
are taken under consideration. 
The next Theorem is a variant of the Feynman--Kac formula \cite{oksendal}
for continuous-time Markov chains on finite state spaces.
This will enable us to compute
the distribution of Markovian GSRNs.

\begin{thm}\label{infg}
 Let $(Y_t)_{t\ge 0}$ be a continuous-time Markov chain
 on a finite state space $S$ of which the infinitesimal generator
 is $G$ 
 and let $f:S\to\R$ be an injective
 function.
 If $(X_t)_{t\ge 0}$ denotes the path integral process of
 $f\circ Y$:
 \[
 X_{t}=\int\limits_{0}^t f\circ Y_s\,\mathrm{d}s,
 \]
 then the characteristic function of $X_t$ can be expressed
 as follows
 \begin{equation}\label{semi}
 (\forall y_0\in S)
 \quad
  \E \left(e^{iuX_t}\left|Y_0=y_0\right.\right) =
e^{t(G+iuM_f)}(\1)(y_0)
 \end{equation}
 where $\1 :S\to\{1\}$ is the constant function
 and $M_f:\R^S\to\R^S$ is the operator of multiplication by $f$. 
\end{thm}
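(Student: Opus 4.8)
The plan is to reduce the statement to a linear ODE for the conditional characteristic function, viewed as a vector indexed by the (finite) state space $S$, and then recognize that ODE as having the matrix exponential $e^{t(G+iuM_f)}$ as its solution operator. Concretely, for fixed $u\in\R$ define, for each $y_0\in S$ and $t\ge 0$,
\[
\varphi_t(y_0) := \E\!\left(e^{iuX_t}\,\middle|\,Y_0=y_0\right),
\]
so that $\varphi_\cdot$ is a curve in the finite-dimensional space $\R^S$ (or $\C^S$) with $\varphi_0=\1$. The whole proof is an argument that $t\mapsto\varphi_t$ satisfies $\frac{\mathrm d}{\mathrm dt}\varphi_t=(G+iuM_f)\varphi_t$.

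The main step is to derive this ODE by conditioning on the chain over a short interval $[0,h]$ and using the Markov property. Writing $X_t=\int_0^h f\circ Y_s\,\mathrm ds+\big(X_t\circ\theta_h\big)$ where $\theta_h$ is the time shift, the Markov property and time homogeneity give
\[
\varphi_{t+h}(y_0)=\E\!\left(e^{iu\int_0^h f\circ Y_s\,\mathrm ds}\,\varphi_t(Y_h)\,\middle|\,Y_0=y_0\right).
\]
Now one expands to first order in $h$: on the event that the chain does not jump before time $h$ (probability $1+o(h)$ given $Y_0=y_0$, since $S$ is finite), $\int_0^h f\circ Y_s\,\mathrm ds=h f(y_0)+o(h)$, so $e^{iu\int_0^h f\circ Y_s\,\mathrm ds}=1+iuh f(y_0)+o(h)$; and $\E(\varphi_t(Y_h)\mid Y_0=y_0)=(P(h)\varphi_t)(y_0)=\varphi_t(y_0)+h(G\varphi_t)(y_0)+o(h)$ because $P(h)=\mathrm{id}+hG+o(h)$, which is exactly the differentiability of $P$ at $0$ guaranteed by the generator existence $P(t)=e^{tG}$. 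Multiplying these two expansions and collecting the $O(h)$ term yields
\[
\varphi_{t+h}(y_0)=\varphi_t(y_0)+h\big(G\varphi_t\big)(y_0)+iuh f(y_0)\varphi_t(y_0)+o(h),
\]
i.e. $\frac{\mathrm d}{\mathrm dt}\varphi_t=(G+iuM_f)\varphi_t$, and also (running the same computation with $h\searrow 0$ at $t=0$, or by a symmetric argument with the interval at the end) the left derivative agrees, so $\varphi$ is genuinely $C^1$. Since $\varphi_0=\1$ and the ODE is linear with constant coefficients on a finite-dimensional space, uniqueness of solutions gives $\varphi_t=e^{t(G+iuM_f)}(\1)$, which is \eqref{semi}.

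The step I expect to be the main obstacle is making the $o(h)$ estimates rigorous and uniform: one must justify interchanging expectation with the first-order expansion of $e^{iu\int_0^h f\circ Y_s\,\mathrm ds}$ and control the contribution of the event that a jump occurs in $[0,h]$. Here finiteness of $S$ is essential — $f$ is bounded, so $\big|e^{iu\int_0^h f\circ Y_s\,\mathrm ds}-1-iuhf(y_0)\big|\le C h\,\mathbf 1_{\{\text{jump in }[0,h]\}}+Cu^2h^2$ with a deterministic constant $C=C(u,\|f\|_\infty)$, and $\PP(\text{jump in }[0,h]\mid Y_0=y_0)=O(h)$ because the exit rate $-G_{y_0y_0}$ is finite; similarly $\varphi_t$ is bounded by $1$ in modulus, so all the crossterms are genuinely $o(h)$. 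A small amount of care is also needed to see that $t\mapsto\varphi_t$ is continuous (hence the derivative computation is legitimate at every $t$, not merely a.e.), which again follows from dominated convergence and boundedness of the integrand. Once these routine estimates are in place, the matrix-exponential identity is immediate from the theory of linear ODEs.
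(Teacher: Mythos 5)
Your proof is correct, but it takes a genuinely different route from the paper. The paper writes $X_t$ as a limit of Riemann sums $\frac{t}{m}\sum_{k=1}^m f\bigl(Y_{kt/m}\bigr)$, pulls the limit out of the conditional expectation by dominated convergence, expands the expectation as a sum over discrete paths $(y_1,\ldots,y_m)$ via the Markov property, recognizes the resulting sum as $\bigl(e^{\frac{t}{m}G}e^{\frac{iut}{m}M_f}\bigr)^m(\1)(y_0)$, and concludes with the Lie--Trotter product formula (unproblematic here since $G$ and $iuM_f$ are bounded operators on the finite-dimensional space $\R^S$). You instead derive the backward evolution equation $\frac{\mathrm{d}}{\mathrm{d}t}\varphi_t=(G+iuM_f)\varphi_t$ by a short-time expansion over $[0,h]$ and invoke uniqueness for linear constant-coefficient ODEs. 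Your $o(h)$ bookkeeping is sound: finiteness of $S$ bounds $f$ and the exit rates, the no-jump event has probability $1-O(h)$, and on it $\int_0^h f\circ Y_s\,\mathrm{d}s=hf(y_0)$ exactly, so the error terms are controlled as you claim. Two points of comparison are worth noting. First, your argument nowhere uses the injectivity of $f$; the paper's proof invokes it to rewrite the events $\{f(Y_{kt/m})=f(y_k)\}$ as $\{Y_{kt/m}=y_k\}$, but this hypothesis is an artifact of that proof strategy rather than a genuine requirement of the theorem, and your route makes this transparent. Second, your derivation produces as an intermediate step precisely the Cauchy problem \eqref{eq8} that the paper obtains only afterwards by differentiating \eqref{semi}; in exchange, the paper's Trotter argument avoids the differentiability-in-$t$ discussion (continuity of $\varphi$, agreement of left and right derivatives) that you correctly flag as the part needing care in your approach.
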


\begin{proof}
The function $t\mapsto f(Y_t(\omega))$ is a step function
for all $\omega\in\Omega$ hence its integral can be expressed
as a limit of Riemann sums. Using this and
the dominated convergence theorem
we can write the following.
 \begin{equation} 
 \begin{split}
\E \left(e^{iuX_t}\left|Y_0=y_0\right.\right) 
= \E \left(\left.e^{iu\lim\limits_{m\to\infty} \frac{t}{m}\sum\limits_{k=1}^m  f\left(Y_{\frac{kt}{m}}\right) }\right|Y_0=y_0\right) 
= \lim\limits_{m\to\infty}  \E \left(\left. e^{iu \frac{t}{m}\sum\limits_{k=1}^m f\left(Y_{\frac{kt}{m}}\right) }\right|Y_0=y_0\right)
\end{split}
 \end{equation}
 Using the injectivity of $f$ and the Markovian property of
 $(Y_t)_{t\ge 0}$ we get
 \begin{equation} 
 \begin{split}
 \E& \left(\left. e^{iu \frac{t}{m}\sum\limits_{k=1}^m f\left(Y_{\frac{kt}{m}}\right) }\right|Y_0=y_0\right) 
  = \sum_{y_1,\ldots,y_m\in S} 
 e^{iu \frac{t}{m}\sum\limits_{k=1}^m f(y_k)}
 \PP \left(\left.\bigcap\limits_{k=1}^m \left\lbrace f\left(Y_{\frac{kt}{m}}\right) = f(y_k) \right\rbrace\right|Y_0 = y_0\right)=\\
% =& \sum_{y_1,\ldots,y_m\in S} 
% e^{iu \frac{1}{m}\sum\limits_{k=1}^m f(y_k)}
% \PP \left(\left.\bigcap\limits_{k=1}^m \left\lbrace Y_{\frac{kt}{m}} = y_k \right\rbrace\right|Y_0 = y_0\right)= \\
=& \sum_{y_1,\ldots,y_m\in S} 
 \prod\limits_{k=1}^m e^{\frac{iut}{m}f(y_k)}
 \PP\left(\left.Y_{\frac{kt}{m}} = y_k\right|Y_{\frac{(k-1)t}{m}} = y_{k-1}\right).
 \end{split}
 \end{equation}
 Observe that the received expression can be written 
 as the action of the $m$-times composite of
 the operator $e^{\frac{t}{m}G}e^{\frac{iu}{m}M_f}$ on $\1$.
 \[
 \sum_{y_1,\ldots,y_m\in S} 
 \prod\limits_{k=1}^m 
 e^{\frac{iut}{m}f(y_k)}
 \PP\left(\left.Y_{\frac{kt}{m}} = y_k\right|Y_{\frac{(k-1)t}{m}} = y_{k-1}\right)
 =
 \left(e^{\frac{t}{m}G}e^{\frac{iut}{m}M_f}\right)^m(\1)(y_0)
 \]
 If we take the limit as $m\to\infty$ and
 we use the Lie–Trotter product formula
 we get the desired expression.
 \[
 \E \left(e^{iuX_t}\left|Y_0=y_0\right.\right) =
 \lim\limits_{m\to\infty}\left(e^{\frac{t}{m}G}e^{\frac{iut}{m}M_f}\right)^m 
 (\1)(y_0)
 = e^{t(G+iuM_f)}(\1)(y_0)
 \] 
\end{proof}
% Lie szorzat formula hivatkozás

\medskip
Let us introduce the following notations for the conditional characteristic
function above and the corresponding conditional distribution function.
\begin{equation}
 \begin{split}
  \varphi (t,u)&= \E \left(e^{iuX_t}\left|Y_0\right.\right)\\
  F(t,x)       &= \PP \left(X_t<x\left|Y_0\right.\right)
 \end{split}
\end{equation}
If we take the time derivative of
\eqref{semi} we get that $\varphi$ is the solution of the 
Cauchy-problem below.
\begin{equation}\label{eq8}
 \begin{split}
  \partial_1\varphi &= G\varphi + i u M_f \varphi\\
  \varphi (0,u) &=\1\in\R^S.
 \end{split}
\end{equation}
Let us introduce a normally distributed random variable $\xi$
which is independent
from $X_t$ and its variance is
$\sigma^2>0$. The characteristic function $\varphi_\sigma$ of $X_t + \xi$
satisfies equations similar to \eqref{eq8}.
\begin{equation}
 \begin{split}
  \partial_1\varphi_\sigma &= G\varphi_\sigma + M_f i u \varphi_\sigma\\
  \varphi (0,u) &=e^{-\frac{\sigma^2 u^2}{2}}\cdot\1\in\R^S
 \end{split}
\end{equation}

Assume that $\partial_1 F_\sigma (t,x)$ exists and vanishes
for all $t\in [0,\infty)$
when $x\to -\infty$.
We can write
\begin{equation}
\begin{split}
\partial_1\varphi_\sigma (t,u) 
&=\frac{\partial}{\partial t}
\int\limits_\R e^{iux}\,F_\sigma(t,\mathrm{d}x)=
\frac{\partial}{\partial t}
\int\limits_\R e^{iux}\,
\int\limits_{[0,t]}
\partial_1 F_\sigma(s,\mathrm{d}x)\,\mathrm{d}s
\\
%&\stackrel{\ast}{=}
&=
\frac{\partial}{\partial t}
\int\limits_{[0,t]}
\int\limits_\R e^{iux}\,
\partial_1 F_\sigma(s,\mathrm{d}x)\,\mathrm{d}s=
\int\limits_\R e^{iux}
\,\partial_1 F_\sigma(t,\mathrm{d}x),
\end{split}
\end{equation}
% where $\ast$ equation holds because
and
\begin{equation}
 \int\limits_\R iue^{iux}\,F_\sigma (t,\mathrm{d}x) =
 \left.-\partial_2 F_\sigma (t,x)e^{iux}\right|_{x=-\infty}^{x=\infty}+
 \int\limits_\R iue^{iux}\,F_\sigma (t,\mathrm{d}x) =
 \int\limits_\R -e^{iux} \,\partial_2 F_\sigma (t,\mathrm{d}x) 
\end{equation}
which implies that the Fourier--Stieltjes transform of the
signed Borel measure associated to
$\partial_1 F_\sigma - G F_\sigma + M_f \partial_2 F_\sigma$
is zero
\begin{equation}
\forall t\in [0,\infty)
\,
\forall u\in\R \quad
 \int\limits_\R e^{iux}
 \,
 \left[
 \partial_1 F_\sigma - G F_\sigma + M_f \partial_2 F_\sigma
 \right](t,\mathrm{d}x) = 0.
\end{equation}
On the other hand
\[
\forall t\in [0,\infty)
\quad
 \lim\limits_{x\to -\infty}
 \left[
 \partial_1 F_\sigma - G F_\sigma + M_f \partial_2 F_\sigma
 \right](t,x) = 0
\]
which implies that $F_\sigma$
is the solution of the Cauchy problem
% as follows:
\begin{equation}\label{csy}
 \begin{split}
  \partial_1 F_\sigma &= G F_\sigma - M_f \partial_2 F_\sigma\\
  F_\sigma (0,x) &= \Phi_\sigma (x) \cdot \1\in\R^S,
 \end{split}
\end{equation}
where $\Phi_\sigma$ denotes the distribution function of $\xi$.

\medskip
\begin{rem}
From the integral form of $X$ we obtain the estimation
 \[
 X_t + \xi + \Delta t\cdot m
 \le
 X_{t+\Delta t} + \xi
 \le
 X_t + \xi + \Delta t\cdot M,
 \]
 where $m=\min\limits_{s\in S} f(s)$
 and $M=\max\limits_{s\in S} f(s)$.
 Using this we get
 \[
 F_\sigma (t,x-\Delta t\cdot M)
 \le
 F_\sigma (t+\Delta t,x)
 \le
 F_\sigma (t,x-\Delta t\cdot m)
 \]
which can be applied to estimate partial derivatives of
$F_\sigma$ through estimating difference quotients
as follows:
 \[
 - M\cdot\frac{F_\sigma (t,x) - F_\sigma (t,x-\Delta t M)}{\Delta t\cdot M}
 \le
 \frac{F_\sigma (t+\Delta t,x)-F_\sigma (t,x)}{\Delta t}
 \le
  - m\cdot\frac{F_\sigma (t,x) - F_\sigma (t,x-\Delta t \cdot m)}{\Delta t\cdot m} 
 \]
 ($\le$ relation means elementwise relations).
 Consequently, upper and lower bounds were
 obtained for $\partial_1 F_\sigma (t,x)$
 \[
 -M\cdot \partial_2 F_\sigma (t,x)
 \le
 \partial_1 F_\sigma (t,x)
 \le
 -m\cdot \partial_2 F_\sigma (t,x)
 \]
 from which it follows that the assumption
 about the limit of $\partial_1 F_\sigma (t,x)$
 as $x\to -\infty$ can be omitted.
\end{rem}
The random variable $X_t + \xi$ converges weakly to $X_t$
when $\sigma\to 0$ thus we have to solve
\eqref{csy} and take the limit $\sigma\to 0$ to obtain
$F$ in all points of continuity.
The following Theorem gives an integral equation representation
for $F$.

\medskip
\begin{thm}
If $\left(X_t\right)_{t\ge 0}$ is a stochastic process defined
in Theorem \ref{infg}, then for any $y_0\in S$ the conditional
distribution $F(t,x)_{y_0}=\PP \left(X_t < x\left| Y_0 = y_0\right.\right)$
is the solution of the following integral equation
 \begin{equation}\label{eqthm}
 \begin{split}
 &F(t,x)_{y_0} =
 e^{-t\la}\ind (x\ge t\cdot f(y_0))+         \\
 &+\int\limits_0^t
 \sum\limits_{\sigma\in S\setminus \{y_0\}}
 F(s,x-(t-s)\cdot f(y_0))_{\sigma}
 \cdot
 \PP
 \left(
 Y_{t-s} =\sigma\left|Y_0 =y_0\right.
 \right)
 \la  e^{-\la (t-s)}
 \,
 \mathrm{d}s,
 \end{split}
 \end{equation}
 where the time spent by $Y_s$ in each states has exponential
 distribution with $\la$ parameter.
\end{thm}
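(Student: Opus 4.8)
The plan is to derive \eqref{eqthm} from a first--jump (regeneration) decomposition of the chain $(Y_t)_{t\ge 0}$ started at $y_0$; this is the probabilistic counterpart of applying the variation--of--parameters (Duhamel) formula to the backward Cauchy problem \eqref{csy}. Fix $y_0\in S$, work conditionally on $Y_0=y_0$ throughout, and let $\tau=\inf\{u\ge 0\,:\,Y_u\neq y_0\}$ be the first jump time. Because the time spent in $y_0$ is exponential with parameter $\la$, one has $\PP(\tau>r\mid Y_0=y_0)=e^{-\la r}$, and on $\{\tau=r\}$ the trajectory is identically $y_0$ on $[0,r)$, so that $X_r=r\,f(y_0)$. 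I would split according to whether $\tau>t$ or $\tau\le t$.

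On $\{\tau>t\}$ the chain never leaves $y_0$ before time $t$, hence $X_t=t\,f(y_0)$ with certainty; as this event has probability $e^{-t\la}$ it contributes exactly $e^{-t\la}\,\ind(x\ge t\,f(y_0))$ (at the one exceptional point $x=t\,f(y_0)$ the indicator should be read in accordance with the left--continuity of $x\mapsto F(t,x)_{y_0}$; elsewhere the difference between $<$ and $\le$ is immaterial). On $\{\tau=r\le t\}$ I would invoke the strong Markov property at $\tau$: conditionally on $\{\tau=r\}$ and on the post--jump state $Y_\tau=\sigma$, which is necessarily different from $y_0$, the shifted process $(Y_{r+u})_{u\ge 0}$ is an independent copy of the chain started from $\sigma$, so
\[
X_t \;=\; r\,f(y_0)+\int_r^t f\circ Y_u\,\mathrm{d}u
\]
is, conditionally, distributed as $r\,f(y_0)+X'_{t-r}$, where $X'$ is the path integral of that fresh chain. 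Hence
\[
\PP(X_t<x \mid \tau=r,\ Y_\tau=\sigma,\ Y_0=y_0) \;=\; F(t-r,\ x-r\,f(y_0))_\sigma .
\]
Since the holding times are exponential, the pair (first jump time, state reached at the first jump) has law $\la e^{-\la r}\,\PP(Y_\tau=\sigma\mid Y_0=y_0)\,\mathrm{d}r$; weighting the identity above by this law, summing over $\sigma\in S\setminus\{y_0\}$, integrating $r$ over $[0,t]$, adjoining the no--jump term, and finally substituting $s=t-r$ gives \eqref{eqthm}.

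The remaining points are routine: (a) the exchange of $\sum_{\sigma\in S\setminus\{y_0\}}$ with $\int_0^t$, legitimate because $S$ is finite and the integrand is nonnegative and dominated by $\la e^{-\la r}$; (b) the joint measurability of each $F(\cdot,\cdot)_\sigma$, inherited from \eqref{semi} via Fourier inversion; (c) the $<$ versus $\ge$ point in the indicator, handled as above; and (d) uniqueness, if one insists on reading \eqref{eqthm} with the definite article, which follows from the Volterra (causal) structure of the kernel by a standard Picard iteration in $t$. The step that genuinely requires care --- and which I expect to be the main obstacle --- is the regeneration claim itself: that conditionally on the first jump occurring at time $r$ and carrying the chain into $\sigma$, the tail $\int_r^t f\circ Y_u\,\mathrm{d}u$ is an independent copy of $X_{t-r}$ started from $\sigma$. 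This is an application of the strong Markov property at the stopping time $\tau$ rather than the elementary Markov property at a deterministic time, and it also requires giving meaning to the conditioning on the null event $\{\tau=r\}$ via regular conditional distributions; for a chain with exponential holding times it ultimately rests on the loss--of--memory property of the exponential law and on the independence of the holding times from the embedded jump chain, which is precisely the structure recorded just after the definition of the infinitesimal generator. A purely analytic alternative is to apply Duhamel's formula to \eqref{csy}, writing $G=-\la\,\mathrm{id}+(G+\la\,\mathrm{id})$ and using that the semigroup generated by $-\la\,\mathrm{id}-M_f\partial_2$ acts on the $\sigma$--component by $g(\cdot)\mapsto e^{-\la r}\,g(\cdot-r\,f(\sigma))$; the same substitution $s=t-r$ then reproduces \eqref{eqthm}, though this route presupposes that \eqref{csy} has first been made rigorous, which the discussion preceding the theorem only sketches.
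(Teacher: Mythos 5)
Your proposal is correct and follows essentially the same route as the paper: a first-jump decomposition at the holding time $\tau$, the no-jump term giving $e^{-t\la}\ind(x\ge t f(y_0))$, and the (strong) Markov property at the jump yielding $F(s,x-(t-s)f(y_0))_\sigma$ after the substitution $s=t-r$; your added remarks on the sum--integral exchange, uniqueness, and the Duhamel alternative go beyond what the paper records but do not change the argument. The only discrepancy worth noting is that your regeneration argument naturally produces the embedded jump-chain weight $\PP(Y_\tau=\sigma\mid Y_0=y_0)$ rather than the transition probability $\PP(Y_{t-s}=\sigma\mid Y_0=y_0)$ appearing in \eqref{eqthm}; the paper's own proof elides the same conditioning on $\{\tau=t-s\}$, so this is an imprecision shared with, not introduced by, your write-up.
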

\begin{proof}
Let us denote the time up to the first jump by
 $\tau$. By using the law of total probability
 \[
 F(t,x)_{y_0}  =
 \PP \left(X_t <   x\left| Y_0 = y_0,\tau\ge t\right.\right) 
 \PP \left(\tau\ge t\right)
 +
 \int\limits_0^t
 \PP \left(X_t < x\left| Y_0 = y_0,\tau = t-s\right.\right)
 \cdot
 \la e^{-\la (t-s)}
 \,
 \mathrm{d}s
 \]
 can be written, because $\tau$ is independent from the initial state.
 We have
 $
 \PP \left(X_t <   x\left| Y_0 = y_0,\tau\ge t\right.\right)= 
 \ind (x-t\cdot f(y_0)\ge 0)
 $.
 We get in a similar way
 \[
 \PP \left(X_t < x\left| Y_0 = y_0,\tau =t-s\right.\right)
 =
 \sum\limits_{\sigma\in S\setminus \{y_0\}}
 \PP \left(X_t < x\left| Y_0 = y_0,Y_{t-s} = \sigma,\tau = t-s\right.\right)
  \cdot
 \PP
 \left(
 Y_{t-s} =\sigma\left|Y_0 =y_0\right.
 \right)
 \]
 which is true for any $s\in [0,t)$
 and due to the Markov property of $\left(Y_t\right)_{t\ge 0}$
 \[
  \PP \left(X_t < x\left| Y_0 = y_0,Y_{t-s} = \sigma,\tau = t-s\right.\right)
  =
  F(s,x-(t-s)\cdot f(y_0))_{\sigma}
 \]
can be written which completes the proof.
\end{proof}

\bigskip
\subsection{Constuction of a GSRN} 
% Konkrét esetre a karakterisztikus függvény

\medskip
In this point we consider
$(Y_t)_{t\ge 0}$ a continuous-time
Markov chain on $S=\{0,\ldots,n\}$ ($n\in\N$) of which the
infinitesimal generator is $G=\la_n N\in\R^{(n+1)\times (n+1)}$ where
\[
N =\left(
\begin{array}{cccccc}
  -1 &  1 & 0 & \cdots & 0 & 0 \\
   0 & -1 & 1 & \cdots & 0 & 0 \\
   0 & 0 & -1 & \cdots & 0 & 0 \\
   \vdots & \vdots & \vdots & \ddots & \vdots & \vdots \\
   0 & 0 & 0 & \cdots & -1 & 1 \\
   0 & 0 & 0 & \cdots & 0 & 0
\end{array}\right)
\]
and $(\la_n)_{n\in\N}$ regulates the ``speed'' of the chain.
We will compute the distribution function
of the integral process 
\[X_t = \int\limits_0^t \frac{1}{n}Y_s\,\mathrm{d}s\]
as we presented in the previous subsection. Every piecewise
linear function on $[0,\infty )$ with $n$ line segments
can be realized as a trajectory of $\left(X_t\right)_{t\ge 0}$
thus $X_t$ defines a measure on gauge invariant symmetric norms
which unit circles are $4n$-gons. 

The Cauchy problem corresponding to the smoothed variable
can be written as
\begin{equation}
 \begin{split}
  \partial_1 (F_\sigma)_k + \frac{k}{n} \partial_2 (F_\sigma)_k
  + \la_n (F_\sigma)_k &= \la_n (F_\sigma)_{k+1}\\
  \partial_1 (F_\sigma)_n + \frac{k}{n} \partial_2 (F_\sigma)_n
  + \la_n (F_\sigma)_n &= 0\\
  (F_\sigma )_k (0,x) &= \Phi_\sigma (x),
 \end{split}
\end{equation}
where $(F_\sigma)_k = \PP (X_t + \xi < x|Y_0=k)$ and
$k= 0,\ldots, n$.
If we substitute $(F_\sigma (t,x))_k$ by $e^{-t \la_n}(J_\sigma (t,x))_k$
we obtain
a system of quasilinear partial differential equations
\begin{equation}\label{pde}
 \begin{split}
  \partial_1 (J_\sigma)_k + \frac{k}{n} \partial_2 (J_\sigma)_k
   &= \la_n (J_\sigma)_{k+1}\\   
  \partial_1 (J_\sigma)_{n} + \frac{k}{n} \partial_2 (J_\sigma)_n
   &= 0\\
  (J_\sigma )_k (0,x) &= \Phi_\sigma (x) 
 \end{split}
\end{equation}
which can be solved directly by using methods of
characteristics. We get the following recursion for $F_\sigma$
\begin{equation}
\begin{split}
 (F_\sigma)_n (t,x) &= \Phi_\sigma (x-t) \\
 (F_\sigma)_k (t,x) &= 
 e^{-t\la_n}\Phi_\sigma \left(x-\frac{k}{n}\cdot t\right) +
 \int\limits_0^t
 (F_\sigma)_{k+1} \left(t,x-\frac{k}{n}(t-s)\right)
 \la_n e^{-\la_n (t-s)}
 \,\mathrm{d}s \\
 \text{for }k & = 0,\ldots ,n-1.
\end{split}
\end{equation}
If $\sigma\to 0$ we obtain
\begin{equation}
\begin{split}
 (F)_n (t,x) &= \ind (x-t \ge 0) \\
 (F)_k (t,x) &= 
 e^{-t\la_n}\ind\left(x-\frac{k}{n}\cdot t \ge 0\right) +
 \int\limits_0^t
 (F)_{k+1} \left(t,x-\frac{k}{n}(t-s)\right)
 \la_n e^{-\la_n (t-s)}
 \,\mathrm{d}s \\
 \text{for }k & = 0,\ldots, n-1
\end{split}
\end{equation}
which is the same as we would got, if
formula \eqref{eqthm} have been used.

\medskip
System 
\eqref{pde} for $\sigma = 0$ was solved
numerically by upwind scheme \cite{upwind} and then
expectation of the GSRN,
which is a symmetric gauge invariant norm,
was computed.
In simulations the interval $[0,1]$ was divided into $N+1$
equal parts by $N$ internal points.
Graph of
$(F)_0(t,x)$
is presented in Figure \ref{fig:1},
where simulation was carried out by the following
settings:
$n=10$, $\la_n=10$ and $N=200$.
\begin{figure}[ht!]
 \centering
 \includegraphics[width=0.75\textwidth]{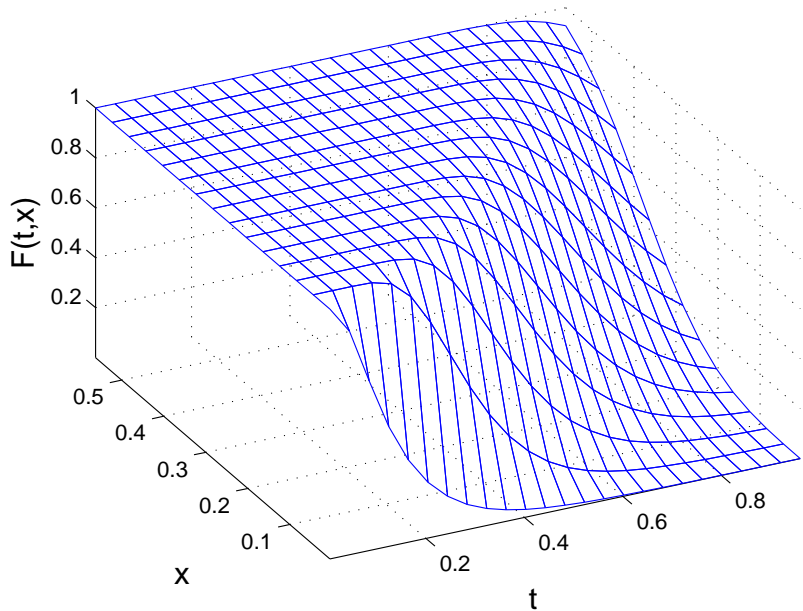}
 \caption{Probability distribution function of $X_t$ 
 ($n=10$, $\la_n=10$, $N=200$).}
 \label{fig:1}
\end{figure}

Unit circles of the expected norm restricted
to $\R^2$ can be seen in Figure \ref{fig:2}.
Two different marginal behaviour can be recognized:
If $\la_n=0$, then the expected norm coincides
with the maximum norm. If $\la_n\to\infty$, then 
the expected norm tends to the usual $1$-norm.
\begin{figure}[ht!]
 \centering
 \includegraphics[width=0.6\textwidth]{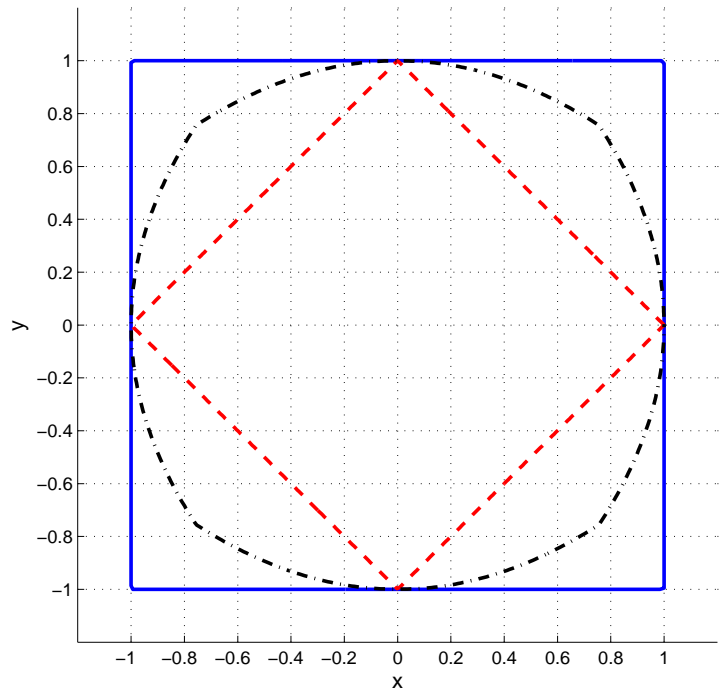}
 \caption{Unit circles of expected norms.
 Solid    -- ($n=1$, $\la_n = 0$, $N=4000$), 
 dashed   -- ($n=1$, $\la_n = 100$, $N=500$),
 dash-dot -- ($n=100$, $\la_n = 100$, $N=1000$).}
 \label{fig:2}
\end{figure}

% Kapott kar. fv. viselkedése midőn n --> végtelen + valszám interpretáció

% Itt kiszámoljuk egy vektor normáját és rajzoltatunk numerikusan egy egységgömböt is
% leírjuk, hogy hogy számoljuk numerikusan a normákat és számolunk ilyet
% n = 1, 10, 100 és végtelen esetére
% A végtelen esetet kiszámoljuk analitikusan

% Expected norm is a norm too 
% Melyik folyamat adja vissza az euklideszi normát?
\bigskip
\section{GSRN in higher dimensions} 

% Eredményeinket általánosítjuk magasabb dimenzióra
% kiszámoljuk numerikusan a 3D egységgömböt adott n-re
% kiszámoljuk analitkusan az egységgömböt n = végtelenre
% bevezetjük a megfelelő l tereket

\bigskip
\subsection{Strong and weak extensions}
%\medskip

One possible way to generalize the results to higher dimensions
is just using the observation that for the familiar $p$-norms
$
||v||_p = ||\left(v_1,||(v_2,\ldots ,v_n)||\right)||_p
$
holds for each $v=(v_1,\ldots ,v_n)\in\C^n$.
We have already constructed GSRN on $\C^2$ ($n=2$).
Suppose that by induction that family of
GSRNs are given on
$\C^k$ where $k<n$. 
Let $v=(v_1,\ldots ,v_n)\in\R^n_{+,\ge}$ be arbitrary and 
$\stackrel{(n-1)}{p}$ is a GSRN defined on 
$\C^{n-1}$. Let $p$ be a GSRN
on $\C^2$ independent from $\stackrel{(n-1)}{p}$
and define
$\stackrel{(n)}{p} (.,v)=p\left(.,\left((v_1,\stackrel{(n-1)}{p}\left(.,(v_2,\ldots ,v_n)\right)\right)\right)$.
Of course, $\stackrel{(n)}{p}$ is a gauge invariant
random norm on $\C^n$, but it is not necessarily
symmetric.
However, the restriction of $\stackrel{(n)}{p}$ 
to $\R^n_{+,\ge}$
defines a GSRN on $\C^n$.
% thus it can be extended through symmetrization to $\C^n$.
If $\stackrel{(n)}{p}$ is a 
GSRN defined on $\C^n$ by using an i.i.d. sequence of
the GSRN $p$ on $\C^2$, then $\stackrel{(n)}{p}$ will be 
called the $n$-dimensional \emph{strong extension}
of $p$. 

The main handicap of the procedure presented above is that for
any $v\in\C^n$ $\stackrel{(n)}{p}$ is a random variable defined on
the $n$ times tensorial product of a probability space which makes
simulations complicated. For this reason we define the \emph{weak extension}
of $p$ which is similar to the strong one just the induction
step is replaced by
$\stackrel{(n)}{p_w} (.,v)=p\left(.,\left((v_1,\E \left(\stackrel{(n-1)}{p_w}\left(.,(v_2,\ldots ,v_n)\right)\right)\right)\right)$.
Unit sphere of the expected norm for $\stackrel{(3)}{p_w}$
is presented in Figure \ref{fig:3}, where 
$\stackrel{(3)}{p_w}$ is the 3-dimensional
weak extension of the GSRN
presented in section \ref{exa}.
\begin{figure}[!ht]
 \centering
 \includegraphics[width=0.8\textwidth]{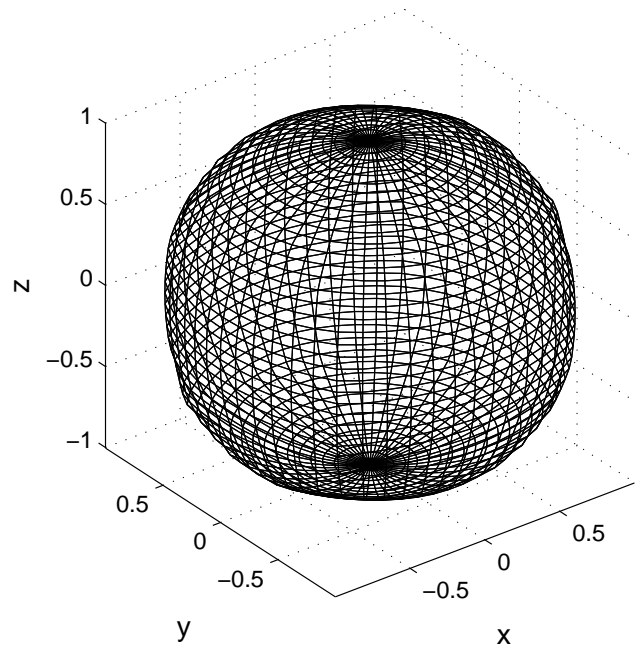}
 \caption{
 Unit sphere of the expected norm defined by the weak extension of the GSRN
 presented in section \ref{exa} ($n=100$, $\la_n=100$, $N=500$).}
 \label{fig:3}
\end{figure}

\bigskip
\subsection{Open problem}
\medskip

Both strong and weak extensions can be generalized to
infinite dimensions as the limit norms of finite dimensional
truncations.
Obviously, property \eqref{inh} is inherited
to infinite dimension hence for any $v=(v_1,v_2,\ldots)$ sequence of complex
numbers which means
$||v||_\infty \le\stackrel{(\infty)}{p}(v) \le ||v||_1$ 
and 
$||v||_\infty \le\stackrel{(\infty)}{p_w}(v)\le ||v||_1$
hold $\PP$-a.s. which implies that
space of sequences for which expected norm is convergent
contains $l^1$ and it is a subspace of $l^\infty$.
This raises many questions. For example:
Let us define an equvalence 
relation between norm processes
in the following way.
Two norm processes are said to be equivalent if
corresponding strong (or weak)
extensions to infinite dimension
define equivalent expected norms.
How can be caracterized equivalence classes of this
equivalence relation?
%\subsection{Construction for infinite dimensional spaces}

%\bigskip
%\section{Numerical simulations} 

%\bigskip
%\section{Applications}
%\medskip

%\newpage


\begin{thebibliography}{10} % Hivatkozások
 \bibitem{bhatia} Rajendra Bhatia, Matrix Analysis, Springer Verlag, 1997, 
 \mbox{ISBN 0-387-94846-5.} % Ezt szépen kell
 
 \bibitem{oksendal} Bernt {{\O}}ksendal, Stochastic Differential Equations,
 Springer Verlag, 2003, 6th edition, \mbox{ISBN-10 3540047581.}
 
% \bibitem{part} William A. Massey, 
% Stochastic orderings for Markov processes on partially ordered spaces,
% \textit{Mathematics of Operation Research},
% Vol. 12, No. 2, 1987.
 
 \bibitem{markov}
 Yin, G. George, Zhang, Qing,
 Continuous-Time Markov Chains and Applications,
 Springer Verlag, 2013, 2nd edition,
 \mbox{ISBN 978-1-4614-4346-9.}
 % Ide kell egy folytonos Markov láncokról szóló könyv
 
 \bibitem{realanal}
 Paul R. Halmos, Measure Theory, Springer Verlag, 1978,
 \mbox{ISBN-10 0387900888.}
 % Ide valós függvénytan könyv kell.
 
 \bibitem{upwind}
 Hirsch, C., Numerical Computation of Internal and External Flows,
 John Wiley \& Sons, 1990, \mbox{ISBN 978-0-471-92452-4.}
 % upwind differencia sémás cikk
 
\end{thebibliography}
\end{document}